\DeclareMathOperator{\ini}{in}
\title{Gr\"obner-coherent rings and modules}
\date{\today}
\author{Rohit Nagpal}
\address{Department of Mathematics, University of Chicago, Chicago, IL}
\email{\href{mailto:nagpal@math.uchicago.edu}{nagpal@math.uchicago.edu}}
\urladdr{\url{http://math.uchicago.edu/~nagpal/}}
\author{Andrew Snowden}
\address{Department of Mathematics, University of Michigan, Ann Arbor, MI}
\email{\href{mailto:asnowden@umich.edu}{asnowden@umich.edu}}
\urladdr{\url{http://www-personal.umich.edu/~asnowden/}}
\thanks{AS was supported by NSF grants DMS-1303082 and DMS-1453893.}
\begin{document}

\begin{abstract}
Let $R$ be a graded ring. We introduce a class of graded $R$-modules called Gr\"obner-coherent modules. Roughly, these are graded $R$-modules that are coherent as ungraded modules because they admit an adequate theory of Gr\"obner bases. The class of Gr\"obner-coherent modules is formally similar to the class of coherent modules: for instance, it is an abelian category closed under extension. However, Gr\"obner-coherent modules come with tools for effective computation that are not present for coherent modules.
\end{abstract}

\maketitle
\tableofcontents

\section{Introduction}

Let $R$ be a graded ring. The purpose of this paper is to isolate a particular class of graded $R$-modules: the Gr\"obner-coherent modules. Roughly speaking, these are graded $R$-modules that are coherent as ungraded $R$-modules but not simply by happenstance: they have a ``reason'' for their coherence related to the grading (that reason basically being an adequate theory of Gr\"obner bases). Formally, the class of Gr\"obner-coherent modules over $R$ behaves similarly to the class of coherent modules: for instance, it is an abelian category and closed under extensions. However, Gr\"obner-coherent modules enjoy an advantage over coherent modules in that they come with effective computation procedures. For instance, if $R$ is a Gr\"obner-coherent ring (meaning Gr\"obner-coherent as a module over itself) and one can compute with finitely presented graded $R$-modules then one can also compute with finitely presented (ungraded) $R$-modules.

Our original motivation for developing the theory of Gr\"obner-coherence was in relation to our study of divided power algebras \cite{dp}. Let $\bD$ be the divided power algebra in one variable over a noetherian ring $\bk$. Typically, $\bD$ is not noetherian. We show that $\bD$ is a Gr\"obner-coherent ring. As a consequence, if one can compute with finitely generated $\bk$-modules then one can also compute with finitely presented $\bD$-modules.

\section{Background on coherence}

Let $R$ be a ring. An $R$-module $M$ is {\bf coherent} if it is finitely generated and every finitely generated submodule is finitely presented; clearly, a coherent module is finitely presented. The category of coherent modules is abelian and closed under extensions. The ring $R$ is {\bf coherent} if it is coherent as a module over itself. In this case, every finitely presented module is coherent. Whenever we refer to coherence of a graded ring or module we ignore the grading.

Suppose now that $R$ is a graded ring. Then a graded $R$-module $M$ is {\bf graded-coherent} if it is finitely generated and every finitely generated homogeneous submodule is finitely presented. Once again, a graded-coherent module is finitely presented, and the category of graded-coherent modules is abelian and closed under extensions. The ring $R$ is {\bf graded-coherent} if it is graded-coherent as a module, and then every finitely presented graded module is graded-coherent.

%

\section{Gr\"obner bases}

Gr\"obner bases are typically employed to study ideals in a polynomial ring, or, more generally, submodules of free modules over polynomial rings. However, the ideas apply equally well to study inhomogeneous submodules of graded modules over an arbitrary graded ring. In this section, we develop the theory of Gr\"obner bases in this greater generality.

Let $R$ be a graded ring, and let $M$ be a graded $R$-module. We assume $R$ is supported in non-negative degrees and that $M_n=0$ for $n \ll 0$. Let $x=\sum_{n \in \bZ} x_n$ be a nonzero element of $M$ with $x_n \in M_n$. We define the {\bf degree} of $x$, denoted $\deg(x)$, to be the maximal $n$ such that $x_n \ne 0$. We define the {\bf initial term} of $x$, denoted $\ini(x)$, to be $x_n$ where $n=\deg(x)$. Given an (inhomogeneous) submodule $N$ of $M$, we define the {\bf initial submodule}, denoted $\ini(N)$, to be the homogeneous submodule generated by $\ini(x)$ over all nonzero $x \in N$.

\begin{definition}
Let $N$ be a submodule of a graded module $M$. A collection $\{x_i\}_{i \in I}$ of elements in $N$ is a {\bf Gr\"obner basis} for $N$ if the $\ini(x_i)$ generate $\ini(N)$ as an $R$-module.
\end{definition}

\begin{definition}
Let $N$ be a submodule of a graded module $M$, let $\{x_i\}_{i \in I}$ be a collection of elements of $N$, and let $y$ be another element of $N$. An expression  $y=\sum_{i \in I} a_i x_i$ with $a_i \in R$ is {\bf reduced} if $\deg(a_i)+\deg(x_i) \le \deg(y)$ for all $i$.
\end{definition}

\begin{proposition}
\label{prop:reduced-expression}
Let $M$ be a graded $R$-module, let $N$ be a submodule, and let $\{x_i\}_{i \in I}$ be a collection of elements of $N$. Let $\sum_{i \in I} c_{i,j} \ini(x_i)=0$, for $j$ in an index set $J$, be a homogeneous generating set for the module of syzygies of the $\ini(x_i)$, and put $z_j = \sum_{i \in I} c_{i,j} x_i$. Then the following are equivalent:
\begin{enumerate}
\item The $x_i$ form a Gr\"obner basis for $N$.
\item Every element of $N$ has a reduced expression in terms of the $x_i$.
\item The $x_i$ generate $N$ and each $z_j$ has a reduced expression in terms of the $x_i$.
\end{enumerate}
\end{proposition}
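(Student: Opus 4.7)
The plan is to establish the circle (a) $\Rightarrow$ (b) $\Rightarrow$ (c) $\Rightarrow$ (b) $\Rightarrow$ (a), where the implication (b) $\Rightarrow$ (a) is immediate (take the top-degree part of a reduced expression for $y$ to exhibit $\ini(y)$ as an $R$-linear combination of the $\ini(x_i)$) and (b) $\Rightarrow$ (c) is also immediate since each $z_j$ lies in $N$. The real content is in the two remaining implications, both of which are division-algorithm arguments; both terminate because degrees are integers bounded below thanks to the hypothesis $M_n = 0$ for $n \ll 0$.

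For (a) $\Rightarrow$ (b), I would argue by induction on $\deg(y)$. Given $y \in N$, write $\ini(y)$, which lies in $\ini(N)$, as a homogeneous combination $\ini(y) = \sum_i b_i \ini(x_i)$ with each $b_i$ homogeneous and $\deg(b_i) + \deg(x_i) = \deg(y)$; only finitely many $b_i$ are nonzero. Then $y' = y - \sum_i b_i x_i$ lies in $N$ and has strictly smaller degree than $y$. Iterating (finitely often, since degrees decrease in $\mathbb{Z}$ and are bounded below) and collecting the resulting coefficients produces a reduced expression for $y$.

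For (c) $\Rightarrow$ (b), I would use the generating hypothesis to write an arbitrary $y \in N$ as a finite sum $y = \sum_i a_i x_i$, and then induct on the quantity $d - \deg(y)$, where $d = \max_i(\deg(a_i) + \deg(x_i))$. If $d = \deg(y)$ we are done. Otherwise the degree-$d$ component of $\sum_i a_i x_i$ must vanish, which forces $\sum_i \ini(a_i)\ini(x_i) = 0$ (sum over $i$ with $\deg(a_i)+\deg(x_i) = d$). Thus the tuple $(\ini(a_i))$ is a homogeneous syzygy among the $\ini(x_i)$, so it can be written as $\ini(a_i) = \sum_j e_j c_{i,j}$ for suitable homogeneous $e_j$. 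Rewriting
\[
y = \sum_j e_j z_j + \sum_i \bigl(a_i - \sum_j e_j c_{i,j}\bigr) x_i
\]
and then replacing each $e_j z_j$ by $e_j$ times the reduced expression for $z_j$ produces a new presentation $y = \sum_i a_i' x_i$ whose maximum formal degree is strictly smaller than $d$; the key point is that $\deg(z_j) < \deg(e_j) + \deg(z_j) + \deg(\text{(adjustment)}) $ is controlled, more precisely each summand $e_j b_{i,j} x_i$ has total degree at most $\deg(e_j) + \deg(z_j) < d$, and the leftover terms $(a_i - \sum_j e_j c_{i,j}) x_i$ have degree strictly below $d$ by construction. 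Induction finishes the argument.

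The main obstacle is the bookkeeping in (c) $\Rightarrow$ (b): one has to verify that the syzygy we extract is genuinely homogeneous (so that it is expressible in terms of the homogeneous generating syzygies $c_{i,j}$ with appropriate degrees on the $e_j$) and that the resulting substitution strictly lowers the maximum formal degree. Everything else is formal once termination (via the well-ordering of integer degrees bounded below) is in hand.
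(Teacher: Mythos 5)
Your proposal is correct and follows essentially the same route as the paper: the same induction on degree for (a) $\Rightarrow$ (b), the same top-degree extraction for (b) $\Rightarrow$ (a), and for (c) $\Rightarrow$ (b) the same key rewriting of the top-degree homogeneous syzygy in terms of the $c_{i,j}$ and substitution of the reduced expressions for the $z_j$ to lower the formal degree (the paper phrases this as a minimality-of-$\delta$ contradiction rather than your descent on $d-\deg(y)$, which is only a cosmetic difference). The degree bookkeeping you flag as the main obstacle is handled in the paper exactly as you sketch, via $\deg(b_j)+\deg(c_{i,j})=\deg(a_i'')$ and $\deg(z_j)<d_j$.
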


\begin{proof}
(a) $\implies$ (b). Let $\rS_n$ be the statement ``every element of $N$ of degree $\le n$ has a reduced expression in terms of the $x_i$.'' For $n \ll 0$, the statement $\rS_n$ is obviously true. Now suppose that $\rS_{n-1}$ is true, and let us prove $\rS_n$. Thus let $y \in N$ be an element of degree $n$. We then have an expression $\ini(y) = \sum_{i \in I} a_i \ini(x_i)$ where $\deg(a_i)+\deg(x_i)=\deg(y)$. Let $y'=y-\sum_{i \in I} a_i \ini(x_i)$. Then $y' \in N$ and $\deg(y') \le n-1$, so by $\rS_{n-1}$ we have an expression $y' = \sum b_i x_i$ with $\deg(b_i)+\deg(x_i) \le n-1$. Thus $y=\sum (a_i+b_i) x_i$ is reduced expression for $y$, and $\rS_n$ follows. We conclude that $\rS_n$ holds for all $n$ by induction, and so (b) holds.

(b) $\implies$ (a). Let $y$ be a nonzero element of $N$, and let us show that $\ini(y)$ can be generated by the $\ini(x_i)$. Using (b), we have an expression $y=\sum_{i \in I} a_i x_i$ where $\deg(a_i)+\deg(x_i) \le \deg(y)$. Let $J \subset I$ be the set of indices $i$ such that $a_i x_i$ has degree equal to $\deg(y)$. Then $\ini(y) = \sum_{i \in J} \ini(a_i x_i) = \sum_{i \in J} \ini(a_i) \ini(x_i)$. Thus the $\{x_i\}$ form a Gr\"obner basis, and so (a) holds.

(b) $\implies$ (c) is immediate, (b) implies that the $x_i$ generate $N$, and explicitly states that the $z_j$ admit reduced expressions.

(c) $\implies$ (b). Let $y \in N$ be given. Let $y = \sum_{i \in I} a_i x_i$ be an expression for $y$ in terms of the $x_i$'s with $\delta=\max_{i \in I}(\deg(a_i)+\deg(x_i))$ minimal. We claim $\delta=\deg(y)$, and so the expression is reduced. Assume not. Write $a_i=a_i'+a_i''$ where $a_i''$ is the degree $\delta-\deg(x_i)$ piece of $a_i$ (and $a_i'$ has smaller degree). Then $\sum_{i \in I} a_i'' \ini(x_i)=0$. This is a homogeneous syzygy of the $\ini(x_i)$, and so we have an expression in terms of the $c$'s: there exist homogeneous elements $b_j \in R$ satisfying $\deg(b_j)+\deg(c_{i,j})= \deg(a''_i)$ such that $a''_i = \sum_{j \in J} b_j c_{i,j}$. We have
\begin{displaymath}
y=\sum_{i \in I} a_i x_i = \sum_{i \in I} a_i' x_i + \sum_{i \in I} \sum_{j \in J} b_j c_{i,j} x_i
= \sum_{i \in I} a_i' x_i + \sum_{j \in J} b_j z_j.
\end{displaymath}
Using the reduced expression for the $z$'s, this gives an expression $y=\sum_{i \in I} d_i x_i$ with $\max(\deg(d_i)+\deg(x_i))<\delta$, a contradiction. Thus $\delta=\deg(y)$ as claimed, and (b) holds.
\end{proof}

The above proposition leads to Buchberger's algorithm for finding a Gr\"obner basis. Let $X=\{x_i\}_{i \in I}$ be a generating set for $N \subset M$. The algorithm proceeds as follows:
\begin{enumerate}
\item Compute the set $Z=\{z_j\}_{j \in J}$ as in the proposition.
\item If each $z_j$ has a reduced expression, output $X$ and terminate.
\item Otherwise, replace $X$ with $X \cup Z$ and return to step (a).
\end{enumerate}
If the algorithm terminates, then its output is a Gr\"obner basis. If the input set $X$ is finite and $M$ is graded-coherent, then $X$ will remain finite after each step, and so if the algorithm terminates it will produce a finite Gr\"obner basis. Note that in the usual description of Buchberger's algorithm, one does not add the $z_j$'s to $X$, but their remainders after applying the generalized division algorithm. This is more efficient, but makes no theoretical difference.

In general, analysis of Buchberger's algorithm can be quite difficult. However, there is one situation that we can analyze easily:

\begin{proposition}
\label{prop:buchberger}
Let $M$ be a graded module, $N$ a submodule, and $\{x_i\}_{i \in I}$ a generating set for $N$. Suppose that there exists an integer $\delta \ge 0$ such that every element $y \in N$ admits an expression of the form $y=\sum_{i \in I} a_i x_i$ with $\deg(a_i)+\deg(x_i) \le \deg(y)+\delta$. Then Buchberger's algorithm terminates after at most $\delta$ steps.
\end{proposition}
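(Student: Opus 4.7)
The plan is to track a \emph{defect} parameter: say an expression $y = \sum_{i \in I} a_i x_i$ has defect $\le d$ if $\deg(a_i) + \deg(x_i) \le \deg(y) + d$ for all $i$, so defect $0$ is the same as reduced. The hypothesis asserts that every $y \in N$ admits a defect-$\delta$ expression in terms of the initial generating set $X_0$. I aim to show by induction on $k$ that after $k$ iterations of the loop (each replacing the current $X_{k-1}$ by $X_k = X_{k-1} \cup Z_{k-1}$), every $y \in N$ admits a defect-$(\delta - k)$ expression in terms of $X_k$. Taking $k = \delta$ gives reduced expressions for all elements of $N$, in particular for the syzygy elements $z_j$ computed from $X_\delta$, at which point step (b) of the algorithm terminates it.

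For the inductive step, suppose the claim holds at stage $k$, set $d := \delta - k \ge 1$, write $X = X_k$, and let the current $Z = \{z_j\}$ arise from the homogeneous syzygies $\sum_i c_{i,j} \ini(x_i) = 0$ of some degree $e_j$, so that $\deg(c_{i,j}) + \deg(x_i) = e_j$ whenever $c_{i,j} \ne 0$. The crucial observation — essentially the only non-bookkeeping input — is that $\deg(z_j) \le e_j - 1$ strictly, since the $e_j$-graded piece of $z_j$ is exactly the vanishing syzygy. Now take $y \in N$ with a defect-$d$ expression $y = \sum a_i x_i$. Splitting $a_i = a_i' + a_i''$ where $a_i''$ is the homogeneous piece of degree $\deg(y) + d - \deg(x_i)$, the relation $\sum a_i'' \ini(x_i) = 0$ is a homogeneous syzygy of degree $\deg(y) + d$, so one may write $a_i'' = \sum_j b_j c_{i,j}$ with $\deg(b_j) = \deg(y) + d - e_j$. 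Substituting exactly as in the proof of Proposition~\ref{prop:reduced-expression} yields
\[
y = \sum_i a_i' x_i + \sum_j b_j z_j.
\]
The $a_i'$ contributions have defect $\le d - 1$ by construction, and each $b_j z_j$ term satisfies $\deg(b_j) + \deg(z_j) \le (\deg(y) + d - e_j) + (e_j - 1) = \deg(y) + d - 1$, giving a defect-$(d - 1)$ expression over $X_{k+1}$.

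The main (really only) obstacle is recognizing the strict inequality $\deg(z_j) < e_j$; without it the $z_j$ terms would contribute defect $d$ rather than $d - 1$ and the induction would fail. Once that is in hand, the argument is just a controlled repeat of the bookkeeping in (c)$\implies$(b) of Proposition~\ref{prop:reduced-expression}, executed one defect level at a time.
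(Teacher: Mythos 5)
Your proof is correct and follows the same route the paper intends: its proof of Proposition~\ref{prop:buchberger} is just the remark that each iteration, via the substitution from (c)$\implies$(b) of Proposition~\ref{prop:reduced-expression}, lowers the defect by one, and your induction on $k$ with the key strict inequality $\deg(z_j) < e_j$ is exactly that argument made explicit. Nothing further is needed.
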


\begin{proof}
This is like (c) $\implies$ (b) from the previous proposition: each step in the algorithm lets us reduce $\delta$ by at least one, so we eventually get down to $\delta=0$.
\end{proof}

The following proposition shows how Gr\"obner bases can be used to compute syzygies.

\begin{proposition} \label{grob-syz}
Let $M$ be a graded module, let $N$ be a submodule, and let $\{x_i\}_{i \in I}$ be a Gr\"obner basis for $N$. Let $\sum_{i \in I} c_{i,j} \ini(x_i)=0$, for $j \in J$, be a homogeneous generating set for the module of syzygies of the $\ini(x_i)$, let $z_j=\sum_{i \in I} c_{i,j} x_i$, and let $z_j=\sum_{i \in I} d_{i,j} x_i$ be a reduced expression. Let $\Phi \colon \bigoplus_{i \in I} R e_i \to N$ be the map defined by $\Phi(e_i)=x_i$. We regard $e_i$ as homogeneous of degree $\deg(x_i)$. Let $r_j=\sum_{i \in I} (c_{i,j}-d_{i,j}) e_i$. Then $\{r_j\}_{j \in J}$ is a Gr\"obner basis for $K=\ker(\Phi)$.
\end{proposition}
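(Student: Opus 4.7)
My plan is to identify the initial term of each $r_j$ explicitly, and then show that an arbitrary nonzero element of $K$ has initial term lying in the submodule they generate; in both places the key point is the interaction between the reduced-expression hypothesis and the syzygies among the $\ini(x_i)$.

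First I would verify that $r_j$ actually lies in $K$: applying $\Phi$ gives $\sum(c_{i,j}-d_{i,j})x_i = z_j - z_j = 0$. Next I would compute $\ini(r_j)$. Since $\sum_i c_{i,j}\ini(x_i)=0$ is a \emph{homogeneous} syzygy, each $c_{i,j}$ is homogeneous and $\deg(c_{i,j})+\deg(x_i)$ takes a common value $n_j$ (for those $i$ with $c_{i,j}\ne 0$). In the free module $\bigoplus Re_i$ with $\deg(e_i)=\deg(x_i)$, this means $\sum_i c_{i,j}e_i$ is homogeneous of degree $n_j$. On the other hand, since the leading terms of $z_j$ cancel, $\deg(z_j)<n_j$, and reducedness gives $\deg(d_{i,j})+\deg(x_i)\le \deg(z_j)<n_j$. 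Therefore each $d_{i,j}e_i$ has degree strictly less than $n_j$, so $\ini(r_j)=\sum_i c_{i,j}e_i$.

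Now let $r=\sum_i a_i e_i$ be a nonzero element of $K$ of degree $n$. Writing $a_i=a_i'+a_i''$ where $a_i''$ is the homogeneous piece of degree $n-\deg(x_i)$ (possibly zero) and $a_i'$ has strictly lower-degree terms, I get $\ini(r)=\sum_i a_i''e_i$. The relation $\Phi(r)=0$ forces $\sum_i a_i x_i=0$, and looking at the degree-$n$ component yields $\sum_i a_i'' \ini(x_i)=0$. This is a homogeneous syzygy of the $\ini(x_i)$, so by hypothesis there exist homogeneous $b_j \in R$ with $\deg(b_j)+n_j=n$ and $a_i'' = \sum_j b_j c_{i,j}$. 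Substituting gives
\[
\ini(r) = \sum_i a_i'' e_i = \sum_j b_j \sum_i c_{i,j}e_i = \sum_j b_j \ini(r_j),
\]
which shows $\ini(r)$ lies in the submodule generated by the $\ini(r_j)$. Since this holds for every nonzero $r\in K$, the $\ini(r_j)$ generate $\ini(K)$, so $\{r_j\}$ is a Gr\"obner basis for $K$.

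The only real subtlety is the initial-term computation in the second paragraph: one has to notice that ``reduced'' is precisely the condition needed to guarantee $\deg(d_{i,j}e_i)<\deg(c_{i,j}e_i)$, so that the correction term does not disturb the leading part of $r_j$. Once that is in hand, everything else is an unwinding of definitions combined with the given generation of the syzygies of the $\ini(x_i)$.
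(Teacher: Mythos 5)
Your proof is correct and follows essentially the same route as the paper: you identify $\ini(r_j)=\sum_i c_{i,j}e_i$ using reducedness of the $d_{i,j}$-expression, decompose $a_i=a_i'+a_i''$, and use the homogeneous generating syzygies to write $a_i''=\sum_j b_j c_{i,j}$. The only (cosmetic) difference is at the end, where the paper shows $\deg\bigl(s-\sum_j b_j r_j\bigr)<\deg(s)$ while you directly exhibit $\ini(s)=\sum_j b_j\ini(r_j)$; the underlying computation is the same.
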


\begin{proof}
By definition, $\deg(c_{i,j})+\deg(x_i)=d_j$ is independent of $i$. We have $\deg(z_j)<d_j$, and so $\deg(d_{i,j})+\deg(x_i)<d_j$ as well. Thus $\ini(r_j)=\sum_{i \in I} c_{i,j} e_i$. Now, suppose that $s=\sum_{i \in I} a_i e_i$ is an element of $K$. Thus $\sum_{i \in I} a_i x_i = 0$. Let $\delta = \deg(s) = \max_{i \in I}(\deg(a_i)+\deg(x_i))$, and write $a_i=a_i'+a_i''$ where $a_i''$ is the homogeneous degree $\delta-\deg(x_i)$ piece of $a_i$. Then $\sum_{i \in I} a_i'' \ini(x_i)=0$ is a homogeneous syzygy, and so there are homogeneous element $b_j \in R$ with $\deg(b_j)+\deg(c_{i,j})=\deg(a_i'')$ such that $a_i''=\sum_{j \in J} c_{i,j} b_j$. We have
\begin{displaymath}
s-\sum_{j \in J} b_j r_j=\sum_{i \in I} a_i' e_i+ \sum_{i \in I} \sum_{j \in J} b_j d_{i,j} e_i.
\end{displaymath}
Note that $\deg(a_i') + \deg(e_i) = \deg(a_i') + \deg(x_i) < \delta$ and $\deg(b_j) + \deg(d_{i,j}) + \deg(e_i) < \deg(b_j) + d_j = \deg(b_j) + \deg(c_{i,j}) + \deg(x_i) \le \delta$. Thus the equation above shows that $\deg(s-\sum_{j \in J} b_j r_j) < \delta$ and so $\ini(s)$ is generated by the $\ini(r_j)$. This implies that $\{r_j\}_{j\in J}$ is a Gr\"obner basis for $K$.
\end{proof}

%

\section{Gr\"obner coherence}

The primary definitions are:

\begin{definition} \label{defn:grobcoh}
Let $R$ be a graded ring. We say that a graded $R$-module $M$ is {\bf Gr\"obner-coherent} if it is graded-coherent and every finitely generated inhomogeneous submodule admits a finite Gr\"obner basis.
\end{definition}

\begin{definition}
We say that a graded ring $R$ is {\bf Gr\"obner-coherent} if it is so as a module over itself.
\end{definition}

\begin{remark}
The two conditions in Definition~\ref{defn:grobcoh} (namely, graded-coherent and every submodule admits a finite Gr\"obner basis) play off of each other nicely, as computations with Gr\"obner bases often reduce to computations with leading terms, and the graded-coherence ensures that such computations behave well.
\end{remark}

\begin{proposition}
Let $M$ be a Gr\"obner-coherent graded module. Then $M$ is coherent.
\end{proposition}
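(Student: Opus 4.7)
The plan is to unpack the definition of coherence: we must verify that $M$ is finitely generated and that every finitely generated (inhomogeneous) submodule $N$ of $M$ is finitely presented. The first point is essentially free: by definition Gröbner-coherent implies graded-coherent, which in particular says $M$ is finitely generated as a graded module, hence also as an ungraded module. So the real content is to produce a finite presentation of any finitely generated submodule $N \subseteq M$.

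Given such an $N$, the Gröbner-coherence hypothesis furnishes a finite Gröbner basis $\{x_i\}_{i \in I}$ for $N$. Let $\Phi \colon \bigoplus_{i \in I} Re_i \to N$ be the associated surjection. Since $|I| < \infty$, the module $N$ is already exhibited as a quotient of a finite free $R$-module; all that remains is to prove that $K=\ker(\Phi)$ is finitely generated. For this I would apply Proposition~\ref{grob-syz}, which produces a Gröbner basis $\{r_j\}_{j \in J}$ of $K$ indexed by a homogeneous generating set $\{\sum_i c_{i,j} \ini(x_i)=0\}_{j \in J}$ for the syzygy module of the initial terms $\ini(x_i)$.

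The key step, and the place where the \emph{graded}-coherent half of the Gröbner-coherence hypothesis finally gets used, is to argue that $J$ may be taken finite. The elements $\ini(x_i)$ are finitely many homogeneous elements of $M$, so they generate a finitely generated homogeneous submodule of $M$, namely $\ini(N)$. By graded-coherence of $M$, this submodule is finitely presented as a graded module, which is exactly the statement that the syzygy module of the $\ini(x_i)$ is finitely generated. Thus we may take $|J|<\infty$, so $K$ is finitely generated (indeed, its Gröbner basis generates it, by the equivalence (a)$\Leftrightarrow$(b) of Proposition~\ref{prop:reduced-expression}). Hence $N$ is finitely presented, completing the proof.

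The main (and really the only) obstacle is recognizing that graded-coherence is needed precisely to control the syzygies of the initial terms; once one sees that, Proposition~\ref{grob-syz} does all the work of transporting this control back to the inhomogeneous setting. No further computation is required.
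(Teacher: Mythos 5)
Your proposal is correct and follows essentially the same route as the paper: take a finite Gr\"obner basis of $N$, apply Proposition~\ref{grob-syz} to get a Gr\"obner basis of $\ker(\Phi)$, and use graded-coherence to ensure the syzygy index set $J$ is finite. Your elaboration of why $J$ can be taken finite (finite presentation of $\ini(N)$ as a graded module) is exactly the point the paper dispatches in its parenthetical remark.
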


\begin{proof}
Let $N$ be a finitely generated inhomogeneous submodule of $M$. Let $\{x_i\}_{i \in I}$ be a finite Gr\"obner basis for $N$, and let $\Phi \colon \bigoplus_{i \in I} Re_i \to N$ be the surjection defined by $\Phi(e_i)=x_i$. It follows from Proposition~\ref{grob-syz}, that $\ker(\Phi)$ has a finite Gr\"obner basis (note that the graded-coherence of $M$ implies that the set $J$ in Proposition~\ref{grob-syz} is finite), and so $\ker(\Phi)$ is finitely generated. This shows that $N$ is finitely presented, completing the proof.
\end{proof}

\begin{proposition} \label{grobserre}
Let $M$ be a Gr\"obner-coherent $R$-module and let $M' \subset M$ be a finitely generated homogeneous submodule. Then $M'$ and $M/M'$ are both Gr\"obner-coherent.
\end{proposition}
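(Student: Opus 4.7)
The plan is to check the two defining conditions of Gr\"obner-coherence separately. Graded-coherence of $M'$ and $M/M'$ is standard coherence bookkeeping: any finitely generated homogeneous submodule of $M'$ is likewise one of $M$ (hence finitely presented), while a finitely generated homogeneous submodule of $M/M'$ lifts to a finitely generated homogeneous submodule $N \subset M$ (enlarging by homogeneous generators of $M'$ as needed), and the finite presentability of $N/M'$ then follows from the finite generation of $M'$ and the finite presentability of $N$. For $M'$ the Gr\"obner condition is also immediate: if $N \subset M'$ is finitely generated then $N$ is a finitely generated submodule of $M$, and since $M'$ is a homogeneous submodule, the initial term of an element $x \in N$ is the same whether computed in $M$ or in $M'$; hence a finite Gr\"obner basis for $N$ in $M$, which exists by hypothesis, also serves inside $M'$.

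The substance lies in the quotient. Let $\pi \colon M \to M/M'$ be the projection and let $\overline{N} \subset M/M'$ be finitely generated. Set $N = \pi^{-1}(\overline{N})$; this is finitely generated because $\overline{N}$ and $M'$ are, so by hypothesis $N$ admits a finite Gr\"obner basis $\{x_i\}_{i \in I}$ in $M$. I will show that the subset $\{\pi(x_i) : \ini(x_i) \notin M'\}$ is a finite Gr\"obner basis for $\overline{N}$, and the crux of this is the identity $\ini(\overline{N}) = \pi(\ini(N))$ of submodules of $M/M'$.

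To prove this identity, observe that for nonzero $x \in N$, either $\ini(x) \in M'$, in which case $\pi(\ini(x)) = 0$, or $\ini(x) \notin M'$, in which case $x_m = 0$ for $m > \deg(x)$ gives $\pi(x_m) = 0$ for such $m$, so the top-degree component of $\pi(x)$ is $\pi(\ini(x))$, which is both nonzero and equal to $\ini(\pi(x))$. This proves $\pi(\ini(N)) \subset \ini(\overline{N})$. Conversely, for nonzero $\overline{y} \in \overline{N}$ of degree $n$, pick any lift $y \in N$; the components $y_m$ with $m > n$ all satisfy $\pi(y_m) = 0$, i.e.\ lie in $M'$, so subtracting them off yields $y' = y - \sum_{m > n} y_m$, still in $N$ (since $M' \subset N$) and still a lift of $\overline{y}$, but now with $\deg(y') = n$ and $\ini(y') \notin M'$. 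Then $\ini(\overline{y}) = \pi(\ini(y')) \in \pi(\ini(N))$, giving the reverse inclusion.

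With the equality in hand, the pushforwards $\pi(\ini(x_i))$ generate $\ini(\overline{N})$; the nonzero ones are precisely those with $\ini(x_i) \notin M'$, and for each such $i$ they agree with $\ini(\pi(x_i))$ by the computation in the previous paragraph. This completes the verification of the Gr\"obner condition. The main obstacle throughout is really just the identification $\ini(\overline{N}) = \pi(\ini(N))$, which rests on the lift-adjustment trick; once that is in hand, everything else is formal.
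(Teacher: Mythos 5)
Your proof is correct and takes essentially the same route as the paper: pull $\overline{N}$ back to its preimage in $M$, take a finite Gr\"obner basis there, and push it forward, the key point in both arguments being that homogeneity of $M'$ allows degree-preserving adjustment of lifts. The only difference is cosmetic: the paper verifies the reduced-expression criterion of Proposition~\ref{prop:reduced-expression}, whereas you verify the definition directly via the identity $\ini(\overline{N}) = \pi(\ini(N))$, and discarding the $x_i$ with $\ini(x_i) \in M'$ neatly sidesteps the images that become zero.
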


\begin{proof}
It follows from basic properties of graded-coherence that $M'$ and $M/M'$ are graded-coherent. If $N$ is a finitely generated submodule of $M'$, then it is also one of $M$, and thus admits a finite Gr\"obner basis. Thus $M'$ is Gr\"obner-coherent.

Now let $N$ be a finitely generated submodule of $M/M'$, and let $\wt{N}$ be its inverse image in $M$, which is finitely generated. Let $\{\wt{x_i}\}_{i \in I}$ be a finite Gr\"obner basis for $\wt{N}$, and let $x_i$ be the image of $\wt{x}_i$ in $N$. Let $y \in N$ and let $\wt{y}$ be a lift of $y$ to $\wt{N}$ with $\deg(y)=\deg(\wt{y})$. Let $\wt{y}=\sum_{i \in I} a_i \wt{x}_i$ be a reduced expression. Then $y=\sum_{i \in I} a_i x_i$ is also a reduced expression, and so $\{x_i\}_{i \in I}$ is a finite Gr\"obner basis for $N$. This shows that $M/M'$ is Gr\"obner-coherent.
\end{proof}

\begin{proposition} \label{grobserre2}
Let $M' \subset M$ be graded $R$-modules such that $M'$ and $M/M'$ are Gr\"obner-coherent. Then $M$ is Gr\"obner-coherent.
\end{proposition}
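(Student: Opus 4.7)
The plan is to handle graded-coherence of $M$ quickly and then, for each finitely generated submodule $N \subset M$, construct a finite Gr\"obner basis by stitching together one from $\bar{N} := \pi(N) \subset M/M'$ and one from $N' := N \cap M' \subset M'$, where $\pi \colon M \to M/M'$ denotes the projection.

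Graded-coherence of $M$ follows immediately from the closure of graded-coherent modules under extensions. So fix $N \subset M$ finitely generated, with generators $y_1, \ldots, y_n$. The first move is to choose a finite Gr\"obner basis $\{\bar{u}_1, \ldots, \bar{u}_m\}$ of $\bar{N}$ (possible by Gr\"obner-coherence of $M/M'$) and lift each $\bar{u}_k$ to $u_k \in N$ with $\deg(u_k) = \deg(\bar{u}_k)$; this degree-preserving lift exists because $\pi$ is surjective in every degree and $(M/M')_n = 0$ for $n \ll 0$, so we lift each homogeneous piece of $\bar{u}_k$ to the same degree in $M$ and sum.

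The main step---and the principal obstacle---will be to show that $N'$ is finitely generated. For each $y_i$, use a reduced expression $\pi(y_i) = \sum_k a_{ik} \bar{u}_k$ in $\bar{N}$ to define $v_i := y_i - \sum_k a_{ik} u_k$; then $v_i \in N'$ since $\pi(v_i) = 0$. Separately, because $M/M'$ is Gr\"obner-coherent and thus coherent, $\bar{N}$ is finitely presented, so the kernel of the map $\bigoplus_k R e_k \to \bar{N}$, $e_k \mapsto \bar{u}_k$, is finitely generated; pick generators $c_l = (c_{lk})_k$ of this kernel and set $w_l := \sum_k c_{lk} u_k \in N'$ (using that each $c_l$ is a syzygy of the $\bar{u}_k$). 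Given any $z \in N'$, write $z = \sum_i b_i y_i$ and substitute to obtain $z = \sum_i b_i v_i + \sum_{i,k} b_i a_{ik} u_k$; since $z$ and $\sum_i b_i v_i$ both lie in $M'$, the coefficient vector $(\sum_i b_i a_{ik})_k$ is a syzygy of the $\bar{u}_k$ and therefore equals $\sum_l d_l c_l$ for some $d_l \in R$, which rewrites $z$ as $\sum_i b_i v_i + \sum_l d_l w_l$. Hence $N'$ is generated by $v_1, \ldots, v_n, w_1, \ldots, w_L$.

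Once $N'$ is known to be finitely generated, Gr\"obner-coherence of $M'$ furnishes a finite Gr\"obner basis $\{v^*_1, \ldots, v^*_K\}$ of $N'$, and the claim is that $\{u_1, \ldots, u_m\} \cup \{v^*_1, \ldots, v^*_K\}$ is a Gr\"obner basis for $N$. By Proposition~\ref{prop:reduced-expression}, it suffices to exhibit a reduced expression for each $y \in N$: start from a reduced expression $\pi(y) = \sum_k a_k \bar{u}_k$ in $\bar{N}$, subtract the corresponding combination of the $u_k$ to land in $N'$ with degree at most $\deg(y)$, then apply a reduced expression of this residue in the $v^*_s$. The degree bookkeeping is automatic because $\deg(u_k) = \deg(\bar{u}_k)$ by construction, and the two reduced expressions paste together to give a reduced expression for $y$ in $N$.
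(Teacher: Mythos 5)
Your overall decomposition (project to $M/M'$, lift a Gr\"obner basis, handle the kernel $N \cap M'$ inside $M'$) is the same as the paper's, but there is a genuine gap at the very first step: the degree-preserving lift $u_k \in N$ with $\deg(u_k)=\deg(\bar u_k)$ need not exist. Your justification only produces a lift in $M$: lifting the homogeneous pieces of $\bar u_k$ degree by degree gives an element of $M$ mapping to $\bar u_k$, but that element lies in $N+M'$, not necessarily in the inhomogeneous submodule $N$. Any lift that actually lies in $N$ may have strictly larger degree, its excess top components lying in $M'$ (the paper flags exactly this: ``we cannot necessarily pick $x_i$ to have the same degree as $\bar x_i$''). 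Everything downstream leans on this equality: the residues $y-\sum_k a_k u_k$ are then only bounded by $\deg(y)+\delta$ where $\delta$ is the maximal degree jump of the lifts, so the pasted expression is not reduced, and Proposition~\ref{prop:reduced-expression} does not apply.

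Moreover the conclusion itself fails with honest lifts, so this is not a repairable bookkeeping issue. Take $R=k[t]$, $M=Re_1\oplus Re_2$ with $e_1,e_2$ in degree $0$, $M'=Re_2$, and $N$ generated by $e_1+te_2$ and $t^2e_2$. Then $\bar N$ is generated by $\bar e_1$ (degree $0$), but every lift of $\bar e_1$ into $N$ has degree $\ge 1$; with the lift $e_1+te_2$ and the Gr\"obner basis $\{t^2e_2\}$ of $N\cap M'$, the candidate basis has initial terms $te_2$ and $t^2e_2$, yet $te_1=t(e_1+te_2)-t^2e_2$ lies in $N$ and $\ini(te_1)=te_1$ is not in the submodule they generate. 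The paper's proof absorbs exactly this defect: it records the slack $\delta=\max_i(\deg(x_i)-\deg(\bar x_i))$, shows every $y\in N$ has an expression with degree excess at most $\delta$, and then invokes Proposition~\ref{prop:buchberger} to run Buchberger's algorithm for at most $\delta$ further steps to reach a finite Gr\"obner basis; that extra step is the ingredient your argument is missing. (A smaller remark: your finite generation of $N\cap M'$ is correct but can be had in one line, as the kernel of the surjection from the finitely generated module $N$ onto $\bar N$, which is finitely presented since $M/M'$ is coherent.)
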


\begin{proof}
It follows from basic properties of graded-coherence that $M$ is graded-coherent. Let $N \subset M$ be a finitely generated submodule, and let $\ol{N}$ be its image in $M/M'$. Let $\{\ol{x}_i\}_{i \in I}$ be a finite Gr\"obner basis for $\ol{N}$, and let $x_i \in N$ be a lift of $\ol{x}_i$. Note that we cannot necessarily pick $x_i$ to have the same degree as $\ol{x}_i$. Let $\delta=\max_{i \in I}(\deg(x_i)-\deg(\ol{x}_i))$. Since $\ol{N}$ is coherent, the kernel $K$ of the map $N \to \ol{N}$ is a finitely generated submodule of $M'$. Let $\{x'_j\}_{j \in J}$ be a Gr\"obner basis for $K$.

Now let $y$ be an element of $N$, and let $\ol{y}$ be its image in $\ol{N}$. Let $\ol{y}=\sum_{i \in I} a_i \ol{x}_i$ be a reduced expression, so that $\deg(a_i)+\deg(\ol{x}_i) \le \deg(\ol{y}) \le \deg(y)$. We have $\deg(a_i)+\deg(x_i) \le \deg(y)+\delta$. Put $y'=y-\sum_{i \in I} a_i x_i$. This is an element of $K$ satisfying $\deg(y') \le \deg(y)+\delta$. Let $y'=\sum_{j \in J} a_i' x_j'$ be a reduced expression, so that $\deg(a_i')+\deg(x_j') \le \deg(y)+\delta$. We thus have an expression $y=\sum_{i \in I} a_i x_i+\sum_{j \in J} a_j' x_j'$ where $\deg(a_i)+\deg(x_i) \le \deg(y)+\delta$ and $\deg(a_j')+\deg(x_j') \le \deg(y)+\delta$. This expression shows that the $x_i$ and $x_j'$ generate $N$, and it follows from Proposition~\ref{prop:buchberger} that Buchberger's algorithm applied to this generating set stops after at most $\delta$ steps, producing a finite Gr\"obner basis for $N$. Thus $M$ is Gr\"obner-coherent.
\end{proof}

\begin{corollary} \label{cor:grobserre2}
A finite direct sum of Gr\"obner-coherent modules is Gr\"obner-coherent.
\end{corollary}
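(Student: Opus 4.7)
The plan is straightforward: induct on the number of summands and reduce to the two-summand case, which follows immediately from Proposition~\ref{grobserre2}. Concretely, it suffices to show that if $M_1$ and $M_2$ are Gr\"obner-coherent, then so is $M_1 \oplus M_2$; the general case then follows by writing a finite direct sum as $(M_1 \oplus \cdots \oplus M_{n-1}) \oplus M_n$ and inducting on $n$.

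For the two-summand case, I would apply Proposition~\ref{grobserre2} to the standard split short exact sequence
\begin{displaymath}
0 \to M_1 \to M_1 \oplus M_2 \to M_2 \to 0,
\end{displaymath}
where $M_1$ is embedded as the first summand and the quotient is canonically $M_2$. Both $M_1$ and $M_2$ are Gr\"obner-coherent by hypothesis, and Proposition~\ref{grobserre2} says that extensions of Gr\"obner-coherent modules are Gr\"obner-coherent, so $M_1 \oplus M_2$ is Gr\"obner-coherent.

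There is no substantive obstacle: the corollary is a formal consequence of closure under extensions, and the argument exactly parallels the classical fact that finite direct sums of coherent modules are coherent. The only point worth flagging is that the finiteness of the indexing set is essential, since Gr\"obner-coherence (like coherence) should not be expected to be preserved under arbitrary direct sums.
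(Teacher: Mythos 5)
Your proof is correct and matches the paper's intent: the corollary is stated there as an immediate consequence of Proposition~\ref{grobserre2}, and your argument (induction on the number of summands plus the split exact sequence $0 \to M_1 \to M_1 \oplus M_2 \to M_2 \to 0$) is exactly the implicit argument. No issues.
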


\begin{corollary}
The category of Gr\"obner-coherent $R$-modules is an abelian subcategory of the category of all graded $R$-modules, and is closed under extension.
\end{corollary}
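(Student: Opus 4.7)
The plan is to reduce the claim to the two previous propositions. Closure under extensions is exactly the content of Proposition~\ref{grobserre2}, so the only work is to verify that the class of Gr\"obner-coherent modules is closed under taking kernels, cokernels, and images of morphisms in the ambient abelian category of graded $R$-modules.

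Given a morphism $f \colon M \to N$ of Gr\"obner-coherent modules, I would first handle the image. Since $M$ is graded-coherent it is finitely generated, so the image of $f$ is a finitely generated homogeneous submodule of $N$. Proposition~\ref{grobserre} applied to this submodule of $N$ then yields in one stroke that both the image of $f$ and the quotient $N/\mathrm{im}(f) = \mathrm{coker}(f)$ are Gr\"obner-coherent.

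For the kernel I would invoke the basic fact recalled in the background section that graded-coherent modules form an abelian subcategory of the category of graded $R$-modules; this ensures that $\ker(f)$ is a finitely generated homogeneous submodule of $M$. A second application of Proposition~\ref{grobserre}, now to the inclusion $\ker(f) \subset M$, gives that $\ker(f)$ is Gr\"obner-coherent, completing the verification.

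There is no real obstacle here: every substantive input has already been packaged into Propositions~\ref{grobserre} and~\ref{grobserre2}, and the corollary is a formal consequence of those two results together with the analogous known statement for graded-coherent modules. The only thing to keep an eye on is that both uses of Proposition~\ref{grobserre} are legitimate, i.e.\ that the submodule to which it is applied is genuinely finitely generated and homogeneous; this is precisely why we route through the image rather than directly considering $\ker(f)$ as a subquotient.
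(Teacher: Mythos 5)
Your proof is correct and follows essentially the same route as the paper: finite generation of the kernel, image, and cokernel via graded-coherence (abelianness of that category), then Proposition~\ref{grobserre} for kernels, images, and cokernels and Proposition~\ref{grobserre2} for extensions. The only cosmetic difference is that you get finite generation of the image directly from $M$ being finitely generated rather than from graded-coherence, which changes nothing of substance.
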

\begin{proof} Suppose $f \colon M \to N$ is a map of Gr\"obner-coherent modules. Since the category of graded-coherent modules is abelian,  we know that kernel, cokernel, and image of $f$ are graded-coherent. In particular, these objects are finitely generated. Hence $\ker(f)$, $\coker(f)$ and $\im(f)$ are Gr\"obner-coherent by Proposition~\ref{grobserre}. The statement about extensions follows from Proposition~\ref{grobserre2}.
\end{proof}

\begin{proposition}
Let $R$ be Gr\"obner-coherent and let $M$ be a graded $R$-module. Then $M$ is Gr\"obner-coherent if and only if $M$ is finitely presented.
\end{proposition}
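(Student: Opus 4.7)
The plan is to prove both implications quickly using the structural results already established. The forward direction is essentially definitional: a Gr\"obner-coherent module is in particular graded-coherent, and a graded-coherent module is finitely presented (as noted in the background section).

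For the reverse direction, suppose $M$ is finitely presented. Then there is a short exact sequence
\begin{displaymath}
0 \to K \to F \to M \to 0
\end{displaymath}
in which $F$ is a finite direct sum of grading shifts $R(d_i)$ of $R$, and $K$ is a finitely generated homogeneous submodule of $F$. My strategy is to verify that $F$ is Gr\"obner-coherent and then invoke Proposition~\ref{grobserre} to conclude that the quotient $F/K = M$ is Gr\"obner-coherent.

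To show $F$ is Gr\"obner-coherent, I first observe that each shift $R(d_i)$ is Gr\"obner-coherent: the underlying ungraded module is the same as $R$, the notions of degree and initial term translate along the shift, and so a Gr\"obner basis for a submodule with respect to one grading is a Gr\"obner basis with respect to the shifted grading. Given this, a finite direct sum of shifts of $R$ is Gr\"obner-coherent by Corollary~\ref{cor:grobserre2}.

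The main obstacle, if any, is nothing more than checking that the shift-invariance claim is truly immediate from the definitions of degree and Gr\"obner basis; no substantial calculation is required beyond unfolding definitions. Once this minor observation is in place, the proof is just an application of the preceding propositions in sequence.
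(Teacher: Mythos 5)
Your proof is correct and follows essentially the same route as the paper: write $M=F/K$ with $F$ a finite sum of shifts of $R$, use Corollary~\ref{cor:grobserre2} for $F$ and Proposition~\ref{grobserre} for the quotient, with the converse reduced to the fact that Gr\"obner-coherent modules are coherent (equivalently, graded-coherent), hence finitely presented. Your explicit check that Gr\"obner-coherence is invariant under grading shifts is a small point the paper leaves implicit, and it does hold since degrees, initial terms, and reducedness of expressions all translate uniformly under a shift.
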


\begin{proof}
Suppose that $M$ is finitely presented, and write $M=F/K$ where $F$ is a finite rank free module and $K$ is a finitely generated submodule. Then $F$, being a sum of shifts of $R$, is Gr\"obner-coherent by Corollary~\ref{cor:grobserre2}, and so $M$ is Gr\"obner-coherent by Proposition~\ref{grobserre}. Conversely, a Gr\"obner-coherent module (over any ring) is coherent, and thus finitely presented.
\end{proof}

%

The following proposition shows that one can effectively compute a Gr\"obner basis for an inhomogeneous submodule of a Gr\"obner-coherent module using Buchberger's algorithm, starting from any set of generators.

\begin{proposition}
Let $M$ be a Gr\"obner-coherence $R$-module, let $N$ be a finitely generated inhomogeneous submodule of $M$, and let $x_1, \ldots, x_n$ be a set of generators for $N$. Then Buchburger's algorithm applied to $x_1, \ldots, x_n$ terminates after finitely many steps and yields a Gr\"obner basis for $N$.
\end{proposition}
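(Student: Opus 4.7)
The plan is to deduce termination from Proposition~\ref{prop:buchberger}. For that, it suffices to exhibit a single integer $\delta \ge 0$ such that every $y \in N$ admits an expression $y = \sum_i a_i x_i$ with $\deg(a_i) + \deg(x_i) \le \deg(y) + \delta$.

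To produce such a $\delta$, I would first use Gr\"obner-coherence of $M$ to pick a finite Gr\"obner basis $\{y_1, \ldots, y_m\}$ for $N$. By Proposition~\ref{prop:reduced-expression}(b), every $y \in N$ has a \emph{reduced} expression $y = \sum_j b_j y_j$, so $\deg(b_j) + \deg(y_j) \le \deg(y)$ for all $j$. Next, since the $x_i$ generate $N$, I can fix once and for all expressions $y_j = \sum_i c_{i,j} x_i$ with $c_{i,j} \in R$, and set
\begin{displaymath}
\delta = \max_{i,j}\bigl(\deg(c_{i,j}) + \deg(x_i) - \deg(y_j)\bigr),
\end{displaymath}
taking $\delta \ge 0$ by enlarging if necessary. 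Substituting the fixed expression for each $y_j$ into the reduced expression for $y$ gives $y = \sum_i \bigl(\sum_j b_j c_{i,j}\bigr) x_i$, and the chain of inequalities
\begin{displaymath}
\deg(b_j) + \deg(c_{i,j}) + \deg(x_i) \le \deg(b_j) + \deg(y_j) + \delta \le \deg(y) + \delta
\end{displaymath}
shows the resulting expression satisfies the hypothesis of Proposition~\ref{prop:buchberger} with this $\delta$.

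Having verified the hypothesis, Proposition~\ref{prop:buchberger} gives that Buchberger's algorithm starting from $\{x_1, \ldots, x_n\}$ terminates in at most $\delta$ steps. Since $N$ is finitely generated and $M$ is graded-coherent, the generating set remains finite throughout (as noted in the discussion of the algorithm), so the output is a finite Gr\"obner basis for $N$.

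There is no real obstacle here: the content is the abstract existence of \emph{some} Gr\"obner basis (guaranteed by Gr\"obner-coherence), together with the bookkeeping lemma Proposition~\ref{prop:buchberger} which converts ``bounded expression defect'' into termination. The only mild point to watch is that while the fixed expressions $y_j = \sum_i c_{i,j} x_i$ are needed to define $\delta$, they are only used in the proof, not in the algorithm itself, so the statement as given is well-posed.
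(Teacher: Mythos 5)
Your proposal is correct and follows essentially the same route as the paper: take a finite Gr\"obner basis of $N$ (from Gr\"obner-coherence), express its elements in terms of the given generators to define a uniform defect $\delta$, and then invoke Proposition~\ref{prop:buchberger} for termination. The only (minor) extra care you add is explicitly ensuring $\delta \ge 0$, which the paper leaves implicit.
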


\begin{proof}
Since $M$ is Gr\"obner-coherent, $N$ admits a finite Gr\"obner basis, say $y_1, \ldots, y_m$. Since the $x_i$ generate $N$, we can write $y_i=\sum_{j=1}^n a_{i,j} x_j$ for scalars $a_{i,j} \in R$. Let $\delta$ be the maximum value of $\deg(a_{i,j})+\deg(x_j)-\deg(y_i)$ over all $i$ and $j$. Now, let $z \in N$ be an arbitrary element. Since $\{y_i\}$ forms a Gr\"obner basis, we have a reduced expression $z=\sum_{i=1}^n b_i y_i$. This gives $z=\sum_{j=1}^m c_j x_j$ with $c_j=\sum_{i=1}^n a_{i,j} b_i$. We have
\begin{displaymath}
\deg(c_j)+\deg(x_j) \le \deg(a_{i,j})+\deg(b_i)+\deg(x_j) \le \deg(z)+\delta.
\end{displaymath}
Thus Buchberger's algorithm applied to $x_1, \ldots, x_n$ terminates after at most $\delta$ steps by Proposition~\ref{prop:buchberger}.
\end{proof}

\section{Further results}

The following result gives a potentially useful way of establishing Gr\"obner-coherence: once graded-coherence is known, Gr\"obner-coherence is, in a sense, local.

\begin{proposition}
\label{prop:locally-grobner}
Let $R$ be a graded ring and let $\bk=R_0$. Let $M$ be a graded-coherent $R$-module. Then $M$ is Gr\"obner-coherent if and only if $M_{\fm}$ is Gr\"obner-coherent for each maximal ideal $\fm$ of $\bk$.
\end{proposition}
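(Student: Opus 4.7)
The plan is to prove the two implications separately; the forward direction is essentially formal, while the converse rests on a compactness argument on $\mathrm{Spec}(\bk)$.

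For the forward direction, I would observe that every element of $\bk=R_0$ has degree zero, so inverting elements of $\bk\setminus\fm$ preserves degrees and hence commutes with the formation of initial terms: $\ini(y/s)=\ini(y)/s$. Given a finitely generated inhomogeneous submodule $N'\subset M_{\fm}$, its generators clear denominators to yield a finitely generated submodule $N\subset M$ with $N_{\fm}=N'$; a finite Gr\"obner basis for $N$ then localizes to one for $N'$. Graded-coherence of $M_{\fm}$ itself follows from graded-coherence of $M$ by exactness of localization.

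For the converse, given a finitely generated inhomogeneous submodule $N\subset M$ with generators $x_1,\dots,x_n$, I would run Buchberger's algorithm globally in $M$ to obtain an increasing sequence $X_0\subset X_1\subset\cdots$, with $X_{k+1}=X_k\cup Z_k$, where $Z_k$ is built from a finite homogeneous generating set of the syzygy module of $\ini(X_k)$ (finite by graded-coherence of $M$). Because localization is exact, $Z_k/1$ is a valid choice of syzygy lifts for the local Buchberger algorithm applied to $X_k/1$ in $M_{\fm}$, which terminates at some step $k_{\fm}$ by hypothesis together with the preceding proposition. The difficulty is that $k_{\fm}$ may vary unboundedly with $\fm$; to uniformize I would use the following local-to-global lemma. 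For any $z\in N$ and finite set $Y\subset N$, the set $T_z(Y)=\{s\in\bk:sz\text{ admits a reduced expression in }Y\}$ is an ideal of $\bk$ (closed under $\bk$-scaling since $\bk=R_0$ lies in degree zero), so if $T_z(Y)\not\subset\fm$ for every maximal $\fm$ then $T_z(Y)=\bk$ and $z$ itself admits a reduced expression in $Y$.

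For each $\fm$, local termination at step $k_{\fm}$ provides, for each $z\in Z_{k_{\fm}}$, an element $s_z\in T_z(X_{k_{\fm}})\setminus\fm$, and hence a basic open neighborhood $U^{\fm}=\bigcap_{z\in Z_{k_{\fm}}}D(s_z)\subset\mathrm{Spec}(\bk)$ of $\fm$ on which $X_{k_{\fm}}/1$ is already a Gr\"obner basis for $N$ localized there, by Proposition~\ref{prop:reduced-expression} (c)$\implies$(a). The $U^{\fm}$ cover the maximal spectrum of $\bk$, hence all of $\mathrm{Spec}(\bk)$ (any proper ideal lies in a maximal one), and by quasi-compactness admit a finite subcover $U^{\fm_1},\dots,U^{\fm_r}$. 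Setting $K=\max_i k_{\fm_i}$, we have $X_K\supset X_{k_{\fm_i}}$ for each $i$, so $X_K/1$ is a Gr\"obner basis for $N$ localized at every maximal ideal; each $z\in Z_K$ thus admits a reduced expression locally everywhere, and applying the ideal lemma once more lifts this to a global reduced expression in $X_K$, so the algorithm terminates at step $K$. The main obstacle is exactly this non-uniformity of $k_{\fm}$, overcome by combining the ideal-theoretic lemma with quasi-compactness of $\mathrm{Spec}(\bk)$.
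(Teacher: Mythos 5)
Your proof is correct in substance, but it is organized differently from the paper's. The forward direction is essentially the paper's (a Gr\"obner basis of a submodule of $M$ localizes to one of the localized submodule; alternatively it is a special case of Proposition~\ref{flatgrob}, localization being flat). For the converse, the paper does not run Buchberger's algorithm globally: for each $\fm$ it pulls back a finite local Gr\"obner basis to elements of $N$, enlarges it to a generating set, and uses Proposition~\ref{prop:reduced-expression} to spread the Gr\"obner property out to $N[1/s]$ for a single $s \notin \fm$; finitely many such $s_k$ then generate the unit ideal, and the union of the corresponding finite sets is shown directly to be a Gr\"obner basis of $N$ by patching reduced expressions of an \emph{arbitrary} $y \in N$ via $\sum_k c_k s_k^{n_k}=1$. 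You instead prove uniform termination of the global Buchberger chain $X_0 \subset X_1 \subset \cdots$: local termination (via the proposition on termination of Buchberger's algorithm over a Gr\"obner-coherent module, which does precede this one, so there is no circularity), spreading over basic opens, quasi-compactness of $\mathrm{Spec}(\bk)$ to get a uniform step $K$, and then the same partition-of-unity patching, applied only to the finitely many elements of $Z_K$. Both arguments turn on the same two mechanisms (spreading a reduced expression over $D(s)$ and recombining via $\sum_k c_k s_k^{n_k}=1$); yours additionally yields that the global Buchberger run itself terminates, while the paper's is leaner, needing only Proposition~\ref{prop:reduced-expression}. Two small repairs: define $T_z(Y)$ as the set of $s$ with $sz=\sum a_i y_i$ and $\deg(a_i)+\deg(y_i) \le \deg(z)$ (bounding by $\deg(z)$, not $\deg(sz)$, since multiplying by $s$ or by $s+t$ can lower degree); with that definition it really is an ideal of $\bk$, and this weaker condition is all the patching step uses. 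Also, your identities $\ini(y/s)=\ini(y)/s$ and ``$Z_k/1$ is a valid choice of local syzygy data'' tacitly assume initial terms do not die under localization; the paper glosses the same point (choosing preimages of the same degree, and in Proposition~\ref{flatgrob}), so this is a shared imprecision rather than a gap peculiar to your argument, but it merits a sentence.
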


\begin{proof}
First, suppose $M$ is Gr\"obner-coherent and let $\fm$ be a maximal ideal of $\bk$. Let $N$ be a finitely generated submodule of $M_{\fm}$. Then there is a finitely generated module $N'$ of $M$ such that $N = N'_{\fm}$. Let $\{x_i\}_{i \in I}$ be a finite Gr\"obner basis of $N'$ and let $y_i$ be the image of $x_i$  under the natural map $f \colon N' \to N'_{\fm} = N$. Let $y \in \ini(N)$. Then there is an element $x \in N'$ of the same degree as $y$ such that $y = f(x)$. Let $x = \sum_{i \in I}a_i x_i$ be a reduced expression. Then $y = \sum_{i \in I}f(a_i) y_i$ is a reduced expression as well. This shows that $\{y_i\}_{i \in I}$ is a Gr\"obner basis of $N$. 

Conversely, suppose $M_{\fm}$ is Gr\"obner-coherent for each maximal ideal $\fm$ of $\bk$. Let $N$ be a finitely generated inhomogeneous submodule. Suppose that $\{y_i\}_{i \in I}$ is a finite Gr\"obner basis for $N_{\fm}$ for some maximal ideal $\fm$. Multiplying by an element $s \in \fm$ if necessary, we may assume that each $y_i$ are of the form $f(x_i)$ for some $x_i \in N$ where $f$ is the localization map $N \to N_{\fm}$, and by adding in finitely many elements we may assume that $\{x_i\}_{i \in I}$ generate $N$. Let $\{z_j\}_{j \in J}$ be as in Proposition~\ref{prop:reduced-expression}. Since $M$ is graded-coherent $J$ can be assumed to be finite. Each $z_j$ admits a reduced expression in terms of the $x_i$ after localizing at $\fm$. Since there are only finitely many scalars involved, it follows that $z_j$ admits such an expression after
 inverting a single element $s \not\in \fm$. Thus by Proposition~\ref{prop:reduced-expression}, the image of $\{x_i\}_{i \in I}$ under the localization $f_s \colon N \to N[1/s]$ define a Gr\"obner basis for $N[1/s]$. 
 
 Since $M_{\fm}$ is Gr\"obner-coherent for each $\fm$, the above argument shows that there are $s_1, \ldots, s_n$ generating the unit ideal and sets $\{x^k_i\}_{i \in I_k} \subset N$ for $1 \le k \le n$  such that for each $k$ the set $\{f_{s_k}(x^k_i)\}_{i \in I_k}$ is a finite Gr\"obner basis of $N[1/s_k]$ and  $\{x^k_i\}_{i \in I_k}$ generates $N$. We claim that $\bigcup_{1 \le k \le n} \{x^k_i\}_{i \in I_k}$ is a Gr\"obner basis of $N$. To see this let $y \in N$. Multiplying a reduced expression of $f_{s_k}(y)$ in terms of $f_{s_k}(x_i^k)$ by a large enough power $s_k^{n_k}$ we can obtain a reduced expression $\sum_{i \in  I^k} a_i^k x_i^k$ for  $s_k^{n_k}y$. Since  $s_1, \ldots, s_n$ generate the unit ideal there are $c_k$ satisfying $\sum_{k=1}^n c_k s_k^{n_k} = 1$. Now  $\sum_{k=1}^n\sum_{i \in  I^k} c_k a_i^k x_i^k$ is a reduced expression for $y$. This shows that $\cup_{1 \le k \le n} \{x^k_i\}_{i \in I_k}$ is a Gr\"obner basis of $N$, completing the proof.
\end{proof}

The next results show that Gr\"obner properties behave well along flat maps.

\begin{proposition} \label{flatgrob}
Let $R \to S$ be a flat map of graded rings, let $M$ be a graded $R$-module, let $N \subset M$ be an inhomogeneous submodule, and let $\{x_i\}_{i \in I}$ be a Gr\"obner basis for $M$. Then $\{1 \otimes x_i\}_{i\in I}$ is a Gr\"obner basis for $S \otimes_R N \subset S \otimes_R M$.
\end{proposition}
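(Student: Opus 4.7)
The plan is to prove the stronger statement $\ini(S \otimes_R N) = S \otimes_R \ini(N)$ as $S$-submodules of $S \otimes_R M$ (where flatness of $R \to S$ is used to embed $S \otimes_R \ini(N)$ into $S \otimes_R M$). Once this is established, the Gr\"obner basis statement follows quickly: since $\{\ini(x_i)\}_{i \in I}$ generates $\ini(N)$ by hypothesis, the set $\{1 \otimes \ini(x_i)\}_{i \in I}$ generates $\ini(S \otimes_R N)$ as an $S$-module. For each $i$, either $1 \otimes \ini(x_i) = 0$ (in which case this generator is redundant and I can ignore it), or $\deg(1 \otimes x_i) = \deg(x_i)$ and hence $\ini(1 \otimes x_i) = 1 \otimes \ini(x_i)$. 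In either case, $\{\ini(1 \otimes x_i)\}_{i \in I}$ generates $\ini(S \otimes_R N)$, which is the Gr\"obner basis condition.

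The inclusion $S \otimes_R \ini(N) \subseteq \ini(S \otimes_R N)$ is the easy direction and I would handle it first: for $y \in N$, the top graded piece of $1 \otimes y$ equals $1 \otimes \ini(y)$ when nonzero (and $1 \otimes \ini(y) = 0$ is trivially in $\ini(S \otimes_R N)$ otherwise). Combined with $S$-linearity of $\ini(S \otimes_R N)$ as a homogeneous submodule, this gives the containment.

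The hard part will be the reverse inclusion, and flatness enters in an essential way. Given $\xi \in S \otimes_R N$ with $\deg(\xi) = D$, I would write $\xi = \sum_k s_k \otimes y_k$ with $s_k \in S$ homogeneous and $y_k \in N$, and set $E = \max_k(\deg(s_k) + \deg(y_k)) \ge D$. The argument proceeds by induction on $E$. When $E = D$, the degree-$D$ piece of $\xi$ equals $\sum_{k : \deg(s_k) + \deg(y_k) = D} s_k \otimes \ini(y_k)$, which is manifestly in $S \otimes_R \ini(N)$. When $E > D$, the degree-$E$ piece of $\xi$ must vanish, producing a homogeneous syzygy
\begin{displaymath}
\sum_{k \in K} s_k \cdot (1 \otimes \ini(y_k)) = 0 \quad \text{in } (S \otimes_R M)_E,
\end{displaymath}
where $K = \{k : \deg(s_k) + \deg(y_k) = E\}$. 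By flatness of $R \to S$, syzygies of the finite family $\{\ini(y_k)\}_{k \in K}$ over $S$ are $S$-linear combinations of homogeneous syzygies over $R$, so I can write $(s_k)_{k \in K} = \sum_\alpha b_\alpha \cdot (1 \otimes (c_{k, \alpha})_{k \in K})$ with each $(c_{k, \alpha})_k$ a homogeneous $R$-syzygy of some degree $E_\alpha$ and $\deg(b_\alpha) = E - E_\alpha$. Setting $z_\alpha = \sum_{k \in K} c_{k, \alpha} y_k \in N$, the fact that $\sum_k c_{k, \alpha} \ini(y_k) = 0$ forces $\deg(z_\alpha) < E_\alpha$, so $\sum_{k \in K} s_k \otimes y_k = \sum_\alpha b_\alpha \otimes z_\alpha$ has all terms of degree strictly less than $E$; combined with the terms $s_k \otimes y_k$ for $k \notin K$ (which already satisfy $\deg(s_k) + \deg(y_k) < E$), this yields a new representation of $\xi$ with strictly smaller $E$. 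The induction terminates since $E$ is bounded below by $D$.
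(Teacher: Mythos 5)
Your argument is correct, but it takes a genuinely different route from the paper. The paper's proof is a short application of the syzygy criterion (condition (c)) of Proposition~\ref{prop:reduced-expression}: flatness identifies the kernel of $S^{\oplus I} \to S \otimes_R M$ with $S \otimes_R K$, so the homogeneous generators $c_{i,j}$ of the $R$-syzygies of the $\ini(x_i)$ remain generators of the $S$-syzygies, and the reduced expressions for the $z_j$ base-change to reduced expressions for $1 \otimes z_j$, which is exactly condition (c) for the family $\{1 \otimes x_i\}$. You instead prove the stronger statement $\ini(S \otimes_R N) = S \otimes_R \ini(N)$ inside $S \otimes_R M$, by a downward induction on the defect $E - D$ of a tensor representation, using the equational criterion of flatness (every $S$-relation among the $\ini(y_k)$ is an $S$-combination of homogeneous $R$-relations, which is the same identification $\ker(S^{\oplus K} \to S\otimes_R M) = S \otimes_R \ker(R^{\oplus K} \to M)$ that the paper uses); your inductive step is essentially the (c) $\implies$ (b) degree-lowering argument of Proposition~\ref{prop:reduced-expression} redone in the base-changed setting. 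The trade-off: the paper's proof is shorter because it reuses the already-established equivalence, while yours yields the cleaner intermediate fact that formation of the initial submodule commutes with flat base change, without any Gr\"obner basis hypothesis in the hard inclusion, and your handling of the degenerate case $1 \otimes \ini(x_i) = 0$ (redundant generator, with $\ini(1 \otimes x_i)$ still lying in $\ini(S \otimes_R N)$) is the right way to close the final step. One small point you gloss over, though it is routine: when you invoke flatness you should note that the syzygy kernel is a homogeneous submodule of the graded free module, so a homogeneous element of $S \otimes_R K$ can indeed be written with homogeneous $\kappa_\alpha \in K$ and homogeneous $b_\alpha \in S$ of complementary degree, by taking graded components of an arbitrary expression.
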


\begin{proof}
Let $\sum_{i \in I} c_{i,j} \ini(x_i)=0$, for $j$ in an index set $J$, be a homogeneous generating set for the module of syzygies of the $\ini(x_i)$. This syzygy module is simply the kernel $K$ of the map $R^{\oplus I} \to M$ sending the $i$th basis vector to $\ini(x_i)$. Since $R \to S$ is flat, it follows that the map $S^{\oplus I} \to S \otimes_R M$ has kernel $S \otimes_R K$. Thus the relations $\sum_{i \in I} c_{i,j} (1 \otimes \ini(x_i))=0$, for $j \in J$, form a homogeneous generating set for the module of syzygies of the $1 \otimes \ini(x_i)$. Let $z_j$ be as in Proposition~\ref{prop:reduced-expression}. Since the $x_i$ form a Gr\"obner basis, each $z_j$ admits a reduced expression in terms of the $x_i$. It follows that $1 \otimes z_j$ also admits a reduced expression in terms of the $1 \otimes x_i$, and thus the $1 \otimes x_i$ form a Gr\"obner basis by Proposition~\ref{prop:reduced-expression}.
\end{proof}

\begin{proposition}
\label{prop:flat-limit-grobner}
Let $\{R_i\}_{i \in I}$ be a directed system of Gr\"obner-coherent graded rings such that for all $i \le j$ the transition map $R_i \to R_j$ is flat. Then the direct limit $R$ is Gr\"obner-coherent.
\end{proposition}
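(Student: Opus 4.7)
The plan is to reduce every question about $R$ to a finite stage of the system and then apply Proposition~\ref{flatgrob}. The key preliminary observation is that each map $R_i \to R$ is flat, since $R = \varinjlim_{j \ge i} R_j$ is a filtered colimit of the flat transition maps $R_i \to R_j$.

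Given any finitely generated (possibly inhomogeneous) submodule $N \subset R$ with generators $y_1, \ldots, y_n$, I would first use directedness of $I$ to find a single index $i$ and lifts $\widetilde{y}_1, \ldots, \widetilde{y}_n \in R_i$. Setting $N_i = (\widetilde{y}_1, \ldots, \widetilde{y}_n) \subset R_i$, flatness of $R_i \to R$ makes the natural map $R \otimes_{R_i} N_i \to R \otimes_{R_i} R_i = R$ injective with image exactly $N$, so $N \cong R \otimes_{R_i} N_i$ as graded $R$-modules.

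With this descent in hand, the rest is assembly. To see that $R$ is graded-coherent, I would choose the $y_k$ (and hence the $\widetilde{y}_k$) homogeneous; then $N_i$ is finitely presented because $R_i$ is graded-coherent, and applying the exact functor $R \otimes_{R_i} (-)$ to a finite presentation of $N_i$ produces one for $N$. For the Gr\"obner condition on an arbitrary finitely generated $N \subset R$, I would invoke Gr\"obner-coherence of $R_i$ to obtain a finite Gr\"obner basis $\{x_j\}_{j \in J}$ for $N_i$, and then apply Proposition~\ref{flatgrob} to the flat map $R_i \to R$, the module $R_i$, and the submodule $N_i$; this yields that $\{1 \otimes x_j\}_{j \in J}$ is a finite Gr\"obner basis for $R \otimes_{R_i} N_i \cong N$.

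The step that requires the most care is the descent $N \cong R \otimes_{R_i} N_i$: flatness of $R_i \to R$ is essential both for this identification and for invoking Proposition~\ref{flatgrob}. Once the descent is established, every remaining step is a direct application of results already in the paper, and I do not foresee any substantive additional obstacle.
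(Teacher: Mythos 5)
Your proposal is correct and takes essentially the same route as the paper: the paper likewise observes that a finitely generated ideal of $R$ is the extension of an ideal of some $R_i$, uses flatness of $R_i \to R$ to identify it with $R \otimes_{R_i} \fa'$, and transports a finite Gr\"obner basis via Proposition~\ref{flatgrob}. The only cosmetic difference is that the paper cites Soublin for the graded-coherence of $R$, while you verify it directly by base-changing a finite presentation along the flat map, which is the same standard argument.
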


\begin{proof}
It is a standard fact that $R$ is graded-coherent (\cite[Proposition~20]{soublin}). Now let $\fa$ be a finitely generated ideal of $R$. Since $\fa$ is finitely generated, there is some $i \in I$ such that $\fa$ is the extension of an ideal $\fa' \subset R_i$ along the map $R_i \to R$. Since this map is flat, we have $\fa = \fa' \otimes_{R_i} R$, and so a Gr\"obner basis of  $\fa'$ gives one of $\fa$ by Proposition~\ref{flatgrob}. Thus $\fa$ has a finite Gr\"obner basis, and so $R$ is Gr\"obner-coherent.
\end{proof}

\begin{example}
A direct limit of noetherian graded rings with flat transition maps is Gr\"obner-coherent. In particular, a polynomial ring (with any cardinality of variables) over a noetherian coefficient ring is Gr\"obner-coherent.
\end{example}

\section{Relation between different notions of coherence} \label{ss:cohreln}

As we have seen, the following implications hold:
\begin{displaymath}
\textrm{Gr\"obner-coherent} \implies \textrm{coherent} \implies \textrm{graded-coherent}
\end{displaymath}
We now show that both implications are strict.

First, let $\bk$ be a coherent ring such that $\bk[x]$ is not coherent. By \cite[Proposition~18]{soublin}, such a ring exists: in fact, $\bk$ can be taken to be a countable direct product of $\bQ\lbb t,u \rbb$'s. It is easy to show that $\bk[x]$ is graded-coherent (this also follows from \cite[\S 4.5]{dp}). Thus $\bk[x]$ is an example of a ring that is graded-coherent but not coherent.

Next, let $\bk$ be a valuation ring with non-archimedean valuation group. Thus, letting $v$ be the valuation on $\bk$, there exist non-zero $a, b \in \bk$ with $v(a) > 0$ and $v(b) > n v(a)$ for all $n \in \bN$. We claim that $\bk[x]$ is coherent but not Gr\"obner-coherent. The coherence of $\bk[x]$ follows from \cite[pg.~25]{valuation} (or see \cite[Theorem~7.3.3]{sarah}). Now consider the ideal  $I = (ax + 1, b)$ in $\bk[x]$. It is easy to see that the degree zero piece of $\ini(I)$ is the $\bk$-ideal generated by elements of the form $b/a^n$ for $n \ge 0$, and is clearly not finitely generated. Hence $\ini(I)$ is not finitely generated, completing the proof of the claim. (This example comes from \cite[pg.~10]{yengui}.)

\end{document}